\newcommand{\N}{{\mathds{N}}}
\newcommand{\Z}{{\mathds{Z}}}
\newcommand{\R}{{\mathds{R}}}
\newcommand{\C}{{\mathds{C}}}
\newcommand{\T}{{\mathds{T}}}
\newcommand{\D}{{\mathfrak{D}}}
\newcommand{\A}{{\mathfrak{A}}}
\newcommand{\B}{{\mathfrak{B}}}
\newcommand{\Nbar}{\overline{\N}}
\newcommand{\Lip}{{\mathsf{L}}}
\newcommand{\propinquity}[1]{{\mathsf{\Lambda}_{#1}}}
\newcommand{\dpropinquity}[1]{{\mathsf{\Lambda}^\ast_{#1}}}
\newcommand{\Kantorovich}[1]{{\mathsf{mk}_{#1}}}
\newcommand{\Haus}[1]{{\mathsf{Haus}_{#1}}}
\newcommand{\gh}{{\mathsf{GH}}}
\newcommand{\StateSpace}{{\mathscr{S}}}
\newcommand{\MongeKant}{{Mon\-ge-Kan\-to\-ro\-vich metric}}
\newcommand{\Lqcms}{{\JLL} quantum compact metric space}
\newcommand{\Qqcms}[1]{{$#1$}--\gQqcms}
\newcommand{\gQqcms}{quasi-Leibniz quantum compact metric space}
\newcommand{\unit}{1}
\newcommand{\sa}[1]{{\mathfrak{sa}\left({#1}\right)}}
\newcommand{\JLL}{Lei\-bniz}
\newcommand{\bridge}[1]{#1} %{{\overrightarrow{#1}}}
\newcommand{\dom}[1]{{\operatorname*{dom}\left({#1}\right)}}
\newcommand{\diam}[2]{{\mathrm{diam}\left({#1},{#2}\right)}}
\newcommand{\bridgereach}[1]{{\varrho\left(\bridge{#1}\right)}}
\newcommand{\bridgeheight}[1]{{\varsigma\left(\bridge{#1}\right)}}
\newcommand{\bridgelength}[1]{{\lambda\left(\bridge{#1}\right)}}
\newcommand{\bridgenorm}[2]{{\mathsf{bn}_{ \bridge{#1}  }\left({#2}\right)}}
\newcommand{\Jordan}[2]{{{#1}\circ{#2}}} %{{{#1}\circledcirc{#2}}}
\newcommand{\Lie}[2]{{\left\{{#1},{#2}\right\}}} %{{\left[{#1},{#2}\right]_\ast}}
\newcommand{\worknote}[1]{} %{{#1}}
\theoremstyle{plain}
\newtheorem{theorem}{Theorem}[section]
\newtheorem{corollary}[theorem]{Corollary}
\newtheorem{lemma}[theorem]{Lemma}
\newtheorem{theorem-definition}[theorem]{Theorem-Definition}
\theoremstyle{definition}
\newtheorem{definition}[theorem]{Definition}
\theoremstyle{remark}
\newtheorem{notation}[theorem]{Notation}
\renewcommand{\geq}{\geqslant}
\renewcommand{\leq}{\leqslant}
\numberwithin{equation}{subsection}
\begin{document}

\title[Full Matrix approximations]{Some applications of conditional expectations to convergence for the quantum Gromov-Hausdorff propinquity}

\author{Konrad Aguilar}
\email{konrad.aguilar@gmail.com}
\address{School of Mathematical and Statistical Sciences \\ Arizona State University \\ Tempe AZ 85281}

\author{Fr\'{e}d\'{e}ric Latr\'{e}moli\`{e}re}
\email{frederic@math.du.edu}
\urladdr{http://www.math.du.edu/\symbol{126}frederic}
\address{Department of Mathematics \\ University of Denver \\ Denver CO 80208}

\date{\today}
\subjclass[2000]{Primary:  46L89, 46L30, 58B34.}
\keywords{Noncommutative metric geometry, Gromov-Hausdorff convergence, Monge-Kantorovich distance, Quantum Metric Spaces, Lip-norms, D-norms, Hilbert modules, noncommutative connections, noncommutative Riemannian geometry, unstable $K$-theory.}

\thanks{This work is part of the project supported by the grants H2020-MSCA-RISE-2015-691246-QUANTUM DYNAMICS and the Polish Government grant 3542/H2020/2016/2}

\begin{abstract}
We prove that all the compact metric spaces are in the closure of the class of full matrix algebras for the quantum Gromov-Hausdorff propinquity. We also show that given an action of a compact metrizable group $G$ on a {\gQqcms} $(\A,\Lip)$, the function associating any closed subgroup of $G$ group to its fixed point C*-subalgebra in $A$ is continuous from the topology of the Hausdorff distance to the topology induced by the propinquity. Our techniques are inspired from our work on AF algebras as quantum metric spaces, as they are based on the use of various types of conditional expectations.
\end{abstract}
\maketitle

%\tableofcontents

%%%%%%%%%%%%%%%%%%%%%%%%%%%%%%%%%%%%%%%%%%%

%\Fontauri

\section{Introduction}

The quantum Gromov-Hausdorff propinquity \cite{Latremoliere13,Latremoliere13b,Latremoliere14} provides a natural framework to discuss finite dimensional approximations of quantum spaces in a metric sense by extending the Gromov-Hausdorff distance to noncommutative geometry. Thus, for this new metric, quantum tori are limits of fuzzy tori \cite{Latremoliere13c}, spheres are limits of full matrix algebras \cite{Rieffel01,Rieffel10c,Rieffel15}, AF algebras  are limits of any inductive sequence from which they are constructed \cite{Aguilar16a, Aguilar16b, Latremoliere15d}, any separable nuclear quasi-diagonal C*-algebra equipped with a quasi-Leibniz Lip-norm is the limit of finite dimensional C*-algebras \cite{Latremoliere15c}, noncommutative solenoids are limits of matrix algebras \cite{Latremoliere16}, among other examples of such finite dimensional approximations. 

Many of these examples involve the use of a conditional expectation as a core tool. In this note, we present two new applications of conditional  expectations in constructing quantum  metrics, or proving new convergence results. We hope the techniques suggested in this work may prove helpful for future research. 

Our first new convergence result concerns full matrix algebras approximations for classical compact metric spaces. In \cite{Latremoliere13c} and \cite{Rieffel15} in particular, certain classical metric spaces are limits of full matrix algebras, an intriguing phenomenon. This note answers the natural question of which classical compact metric spaces are limits of full matrix algebras for the quantum propinquity. We shall prove that indeed, any classical compact metric space is the limit, for the quantum propinquity, of a sequence of {\Qqcms{(2,0)}s} constructed on full matrix algebras. Our approximations are very different from the ones presented in the above references, as our focus is not to preserve any symmetry of the limit space, but rather to find a very general method to obtain such full matrix algebra approximations. In particular, it is generally difficult to compute the closure of a particular set of quantum metric spaces for the propinquity. This paper proves that all classical compact metric spaces do lie in the closure of full matrix algebras for the propinquity and give examples to further test the theory of noncommutative geometry and what properties pass, or do not pass, to the limit for convergent sequences of {\gQqcms s}.

Our second new result concerns continuity of fixed point C*-subalgebras for the propinquity under certain natural assumption. If $G$ is a compact metric group acting on a quantum compact metric space $(\A,\Lip)$, then any closed subgroup of $G$ defines a fixed C*-subalgebra of $\A$. We thus have a function from the space of closed subgroups of $G$, metrized by the Hausdorff distance, to the space of fixed point C*-subalgebras of $\A$ for the action of $G$. We metrize the codomain of this map with the quantum propinquity and show that this function is indeed continuous. As an application, we obtain new results about the metric geometry of quantum tori.

We now turn to a summary of some core ingredients of noncommutative metric geometry for our current purpose.

Quantum compact metric spaces are noncommutative generalizations of Lipschitz algebras introduced in \cite{Rieffel98a,Rieffel99} by Rieffel, and inspired by Connes \cite{Connes89}. In \cite{Latremoliere13,Latremoliere15}, additional requirements were placed on the original definition of Rieffel to accommodate the construction of the quantum propinquity. The resulting notion of a {\gQqcms} will be the starting point for our work.

\begin{notation}\label{CD-quasiLeibniz-notation}
For any unital C*-algebra $\A$, we denote the unit of $\A$ by $\unit_\A$, the norm of $\A$ by $\|\cdot\|_\A$, the Jordan-Lie algebra of the self-adjoint elements of $\A$ by $\sa{\A}$, and the state space of $\A$ by $\StateSpace(\A)$.
\end{notation}

\begin{definition}[{\cite{Rieffel98a,Rieffel99,Latremoliere13,Latremoliere15}}]\label{qms-def}
A \emph{\Qqcms{F}} $(\A,\Lip)$, for some function $F:\R^4\rightarrow[0,\infty)$ weakly increasing for the product order, consists of unital C*-algebra $\A$ with unit $\unit_\A$ and a seminorm $\Lip$ defined on a dense Jordan-Lie subalgebra $\dom{\Lip}$ of the space $\sa{\A}$ of self-adjoint elements in $\A$, such that:
\begin{enumerate}
\item $\{ a \in \dom{\Lip} : \Lip(a) = 0 \} = \R\unit_\A$,
\item the \emph{\MongeKant} $\Kantorovich{\Lip}$ defined for any two states $\varphi,\psi \in \StateSpace(\A)$ by:
\begin{equation*}
\Kantorovich{\Lip}(\varphi,\psi) = \sup\left\{ \left| \varphi(a) - \psi(a) \right| : a\in\dom{\Lip}, \Lip(a) \leq 1 \right\}
\end{equation*}
metrizes the weak* topology on $\StateSpace(\A)$,
\item $\Lip$ is lower semi-continuous for $\|\cdot\|_\A$,
\item for all $a,b \in \dom{\Lip}$, we have:
\begin{equation*}
\max\left\{ \Lip\left(\Jordan{a}{b}\right), \Lip\left(\Lie{a}{b}\right)  \right\} \leq F\left(\|a\|_\A,\|b\|_\A,\Lip(a), \Lip(b)\right) \text{,}
\end{equation*}
where $\Jordan{a}{b} = \frac{ab + ba}{2}$ and $\Lie{a}{b} = \frac{ab - ba}{2i}$.
\end{enumerate}
The seminorm $\Lip$ is called an \emph{L-seminorm}.
\end{definition}

\begin{notation}
When $C\geq 1$, $D\geq  0$, and if $F : x,y,l_x,l_y \geq 0 \mapsto C(x l_y + y l_x) + D l_x l_y$, then a {\Qqcms{F}} is called $(C,D)$-quasi-Leibniz, and it is called Leibniz when $C = 1$ and $D = 0$.
\end{notation}

Rieffel provided in \cite{Rieffel98a} the fundamental characterization of compact quantum metric spaces, which is a noncommutative form of the Arz{\'e}la-Ascoli theorem. We will use a version of this characterization found in \cite{Ozawa05} in this paper, which we now recall and adapt slightly to our setting.
\begin{theorem}[{\cite{Ozawa05}}]\label{Rieffel-thm}
Let $\A$ be a unital C*-algebra, $\Lip$ a lower semi-continuous seminorm defined on some dense Jordan-Lie subalgebra $\dom{\Lip}$ of $\sa{\A}$ such that:
\begin{equation*}
\left\{ a \in \dom{\Lip} : \Lip(a) = 0 \right\} = \R \unit_\A
\end{equation*}
and, for some $C\geq 1$, $D\geq 0$:
\begin{equation*}
\max\left\{ \Lip\left(\Jordan{a}{b}\right), \Lip\left(\Lie{a}{b}\right)  \right\} \leq C\left(\|a\|_\A \Lip(b) + \|b\|_\A \Lip(a)\right) + D \Lip(a) \Lip(b)\text{.}
\end{equation*}
The following assertions are equivalent:
\begin{enumerate}
\item $(\A,\Lip)$ is a {\Qqcms{(C,D)}},
\item there exists a state $\mu \in \StateSpace(\A)$ such that the set:
\begin{equation*}
\left\{ a \in \dom{\Lip} : \mu(a) = 0, \Lip(a) \leq 1\right\}
\end{equation*}
is compact for $\|\cdot\|_\A$,
\item for all states $\mu \in \StateSpace(\A)$, the set:
\begin{equation*}
\left\{ a \in \dom{\Lip} : \mu(a) = 0, \Lip(a) \leq 1\right\}
\end{equation*}
is compact for $\|\cdot\|_\A$.
\end{enumerate}
\end{theorem}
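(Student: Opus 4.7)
The plan is to prove the cycle $(3) \Rightarrow (2) \Rightarrow (1) \Rightarrow (3)$. The first implication is immediate since $\StateSpace(\A)$ is nonempty. The substance of the result lies in the remaining two implications, which together express the Arz\'ela-Ascoli-type duality between norm-compactness on the algebra side and metrization of the weak-* topology on the state-space side. The $(C,D)$-quasi-Leibniz inequality is already part of the running hypotheses and plays no role in any step below; it enters only to ensure that the object in question qualifies as a $(C,D)$-{\gQqcms} in the sense of Definition \ref{qms-def}.

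For $(2) \Rightarrow (1)$, fix a state $\mu$ for which $L_\mu := \{a \in \dom{\Lip} : \mu(a) = 0, \Lip(a) \leq 1\}$ is norm-compact, and set $r := \sup_{a \in L_\mu}\norm{a}{\A}$, finite by compactness. First, I would observe that $\Kantorovich{\Lip}$ is bounded by $2r$ on $\StateSpace(\A)$: for any $a \in \dom{\Lip}$ with $\Lip(a) \leq 1$, the translate $a - \mu(a)\unit_\A$ lies in $L_\mu$, and the translating constant cancels in $|\varphi(a) - \psi(a)|$. The central observation is then the identity
\begin{equation*}
\Kantorovich{\Lip}(\varphi,\psi) = \sup_{b \in L_\mu}|\varphi(b) - \psi(b)|.
\end{equation*}
From this, weak-* convergence of a net $(\varphi_\alpha)$ to $\varphi$ implies $\Kantorovich{\Lip}$-convergence: given $\varepsilon > 0$, choose a finite $\varepsilon$-net $b_1,\dots,b_N$ of $L_\mu$ in norm, and use
\begin{equation*}
|\varphi_\alpha(b) - \varphi(b)| \leq 2\varepsilon + \max_{i} |\varphi_\alpha(b_i) - \varphi(b_i)|
\end{equation*}
valid for every $b \in L_\mu$. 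The reverse direction follows from the defining estimate $|\varphi(a) - \psi(a)| \leq \Kantorovich{\Lip}(\varphi,\psi)\Lip(a)$ for $a \in \dom{\Lip}$, together with density of $\dom{\Lip}$ in $\sa{\A}$ and the uniform bound $1$ on state norms.

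For $(1) \Rightarrow (3)$, fix an arbitrary state $\nu$ and let $D$ be the $\Kantorovich{\Lip}$-diameter of the (now $\Kantorovich{\Lip}$-)compact state space. Norm-closedness of $L_\nu$ follows at once from the lower semi-continuity hypothesis on $\Lip$ and norm-continuity of $\nu$. For any $a \in L_\nu$ and state $\varphi$, $|\varphi(a)| = |\varphi(a) - \nu(a)| \leq \Kantorovich{\Lip}(\varphi,\nu)\Lip(a) \leq D$, so $\norm{a}{\A} \leq D$. To obtain total boundedness, given $\varepsilon > 0$ pick a finite $\varepsilon$-net $\varphi_1,\dots,\varphi_N$ of $\StateSpace(\A)$ for $\Kantorovich{\Lip}$, and consider the evaluation map $a \in L_\nu \mapsto (\varphi_1(a),\dots,\varphi_N(a)) \in [-D,D]^N$. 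If $a,b \in L_\nu$ satisfy $|\varphi_i(a-b)| < \varepsilon$ for every $i$, then for arbitrary $\varphi \in \StateSpace(\A)$, choosing $\varphi_i$ within $\Kantorovich{\Lip}$-distance $\varepsilon$ of $\varphi$ yields $|\varphi(a-b)| \leq 3\varepsilon$ and hence $\norm{a-b}{\A} \leq 3\varepsilon$; pulling back a finite $\varepsilon$-cover of $[-D,D]^N$ under the evaluation map produces the desired finite cover of $L_\nu$.

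The main obstacle, as is typical for Rieffel-type metrization results, is the direction $(2) \Rightarrow (1)$, and within it the step that weak-* convergence implies $\Kantorovich{\Lip}$-convergence. This is precisely where the full strength of compactness, rather than mere norm-boundedness, enters: one needs the finite $\varepsilon$-net of $L_\mu$ to reduce uniform control over an infinite family of observables to a finite number of weak-* tests. Ozawa's refinement over Rieffel's original formulation is that a single state suffices in (2); in the approach above this is essentially automatic, since the finiteness of $\Kantorovich{\Lip}$ and the key identity $\Kantorovich{\Lip}(\varphi,\psi) = \sup_{b \in L_\mu}|\varphi(b) - \psi(b)|$ depend on no property of $\mu$ beyond its existence as a state.
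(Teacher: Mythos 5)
Your proof is correct. Note that the paper offers no proof of this theorem to compare against --- it is quoted from \cite{Ozawa05} (refining Rieffel's original characterization in \cite{Rieffel98a,Rieffel99}) --- and your cycle $(3)\Rightarrow(2)\Rightarrow(1)\Rightarrow(3)$, with the translation identity $\Kantorovich{\Lip}(\varphi,\psi)=\sup_{b\in L_\mu}\left|\varphi(b)-\psi(b)\right|$ driving $(2)\Rightarrow(1)$ and total boundedness of $L_\nu$ extracted from a finite $\Kantorovich{\Lip}$-net of states driving $(1)\Rightarrow(3)$, is precisely the standard argument in those references; you are also right that the quasi-Leibniz hypothesis is inert here and only serves to make assertion (1) match Definition (\ref{qms-def}). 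Two cosmetic points: your diameter constant $D$ collides with the quasi-Leibniz constant $D$ already in the statement, and the norm-closedness of $L_\nu$ requires reading ``lower semi-continuous on a dense domain'' in the usual way, namely that $\left\{a\in\sa{\A}:\Lip(a)\leq 1\right\}$ (with $\Lip=+\infty$ off $\dom{\Lip}$) is norm-closed in $\sa{\A}$ --- which is indeed the convention in force throughout the paper.
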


Quasi-Leibniz quantum compact metric spaces form a category for several natural notions of morphisms \cite{Rieffel00,Latremoliere16b}. The noncompact theory is more involved \cite{Latremoliere05b,Latremoliere12b} and will not be used in this note.

Much research has been concerned with the development of a noncommutative analogue of the Gromov-Hausdorff distance, starting with the pioneering work of Rieffel in \cite{Rieffel00} on the quantum Gromov-Hausdorff distance (for which the question raised in this note was solved by the second author in \cite{Latremoliere05}). We will work with the quantum Gromov-Hausdorff propinquity introduced by Latr{\'e}moli{\`e}re in \cite{Latremoliere13} to address two inherent difficulties with the construction of such an analogue: working within a class of quantum compact metric spaces satisfying a given form of the Leibniz inequality and having the desirable property that distance zero would imply *-isomorphism of the underlying C*-algebras.

The construction of the quantum propinquity is involved, and we refer to \cite{Latremoliere13,Latremoliere13b,Latremoliere14,Latremoliere15,Latremoliere15b,Latremoliere15c,Latremoliere15d,Latremoliere16,Latremoliere16b} for a detailed discussion of this metric, its basic properties and some important applications. For our purpose, we will focus on a core ingredient of the construction of the quantum propinquity called a bridge, which enables us to appropriately relate two {\gQqcms s} and compute a quantity on which the propinquity is based.

\begin{definition}[{\cite{Latremoliere13}}]\label{bridge-def}
A \emph{bridge} $\gamma = (\D,\pi_\A,\pi_\B,x)$ from a unital C*-algebra $\A$ to a unital C*-algebra $\B$ consists of a unital C*-algebra $\D$, two unital *-monomorphisms $\pi_\A : \A \hookrightarrow \D$ and $\pi_\B : \B \hookrightarrow \D$, and an element $x \in \D$ such that:
\begin{equation*}
\StateSpace(\D|x) = \left\{ \varphi \in \StateSpace(\D) : \forall d \in \D \quad \varphi(xd) = \varphi(dx) = \varphi(d) \right\} \not= \emptyset \text{.}
\end{equation*}
\end{definition}

We associate a quantity to any bridge which estimates, for that given bridge, how far apart the domain and co-domain of the bridge are.

\begin{notation}
The Hausdorff distance \cite{Hausdorff} on the space of all compact subspaces of a metric space $(X,d)$ is denoted by $\Haus{d}$.
\end{notation}

\begin{definition}[{\cite{Latremoliere13}}]\label{bridge-length}
The \emph{length} $\bridgelength{\gamma\middle\vert\Lip_\A,\Lip_\B}$ of a bridge $\gamma = (\D,\pi_\A,\pi_\B,x)$ from $(\A,\Lip_\A)$ to $(\B,\Lip_\B)$ is the maximum of the following two quantities:
\begin{multline*}
\bridgeheight{\gamma\middle\vert\Lip_\A,\Lip_\B} = \max\left\{ \Haus{\Kantorovich{\Lip_\A}}\left(\StateSpace(\A), \left\{ \varphi\circ\pi_\A : \varphi \in \StateSpace(\D|x) \right\} \right), \right. \\ \left. \Haus{\Kantorovich{\Lip_\B}}\left(\StateSpace(\B), \left\{ \varphi\circ\pi_\B : \varphi \in \StateSpace(\D|x) \right\} \right)  \right\}
\end{multline*}
and
\begin{equation*}
\bridgereach{\gamma\middle\vert \Lip_\A,\Lip_\B} = \max\left\{ 
\begin{array}{l}
\sup_{\substack{a\in\sa{\A} \\ \Lip_\A(a) \leq 1 }} \inf_{\substack{b\in\sa{\B} \\ \Lip_\B(b) \leq 1 }} \bridgenorm{\gamma}{a,b}\\
\sup_{\substack{b\in\sa{\B} \\ \Lip_\B(b) \leq 1 }} \inf_{\substack{a\in\sa{\A} \\ \Lip_\A(a) \leq 1 }} \bridgenorm{\gamma}{a,b}
\end{array} \right\} \text{,}
\end{equation*}
where $\bridgenorm{\gamma}{a,b} = \left\|\pi_\A(a)  x - x \pi_\B(b) \right\|_\D$ for all $a\in\A$ and $b\in\B$.
\end{definition}

We note that in the present paper, all our bridges will have the unit for pivot and thus will have height zero; however the more descriptive Definition (\ref{bridge-length}) is useful to state the following characterization of the quantum propinquity which we will use as our definition for this work.

\begin{theorem-definition}[{\cite{Latremoliere13}}]\label{prop-def}
Let $F : [0,\infty)^4\rightarrow[0,\infty)$ be an increasing function for  the product order, and let $\mathfrak{QM}_{F}$ be the class of all {\Qqcms{F}s}. There exists a class function $\propinquity{F}$ on $\mathfrak{QM}_{F}\times \mathfrak{QM}_{F}$, called the \emph{quantum $F$-propinquity}, such that:
\begin{enumerate}
\item for all $(\A,\Lip_\A)$, $(\B,\Lip_\B)$ in $\mathfrak{QM}_{F}$:
\begin{multline*}
0\leq \propinquity{F}((\A,\Lip_\A),(\B,\Lip_\B)) = \propinquity{F}((\B,\Lip_\B),(\A,\Lip_\A)) \\ \leq \max\left\{ \diam{\A}{\Lip_\A},\diam{\B}{\Lip_\B}  \right\} \text{.}
\end{multline*}
\item for all $(\A,\Lip_\A)$, $(\B,\Lip_\B)$ and $(\D,\Lip_\D)$ in $\mathfrak{QM}_{F}$:
\begin{equation*}
\propinquity{F}((\A,\Lip_\A),(\D,\Lip_\B)) \leq \propinquity{F}((\A,\Lip_\A),(\B,\Lip_\B)) + \propinquity{F}((\B,\Lip_\B),(\D,\Lip_\D)) \text{,}
\end{equation*}
\item for all $(\A,\Lip_\A)$ and $(\B,\Lip_\B)$ in $\mathfrak{QM}_{F}$ and for any bridge $\gamma$ from $\A$ to $\B$, we have:
\begin{equation*}
\propinquity{F}((\A,\Lip_\A),(\B,\Lip_\B)) \leq \bridgelength{\gamma\middle\vert\Lip_\A,\Lip_\B} \text{,}
\end{equation*}
\item $\propinquity{C,D}((\A,\Lip_\A),(\B,\Lip_\B)) = 0$ if and only if there exists a *-isomorphism $\theta : \A \rightarrow \B$ such that $\Lip_\B \circ \theta = \Lip_\A$.
\end{enumerate}
Moreover, the quantum propinquity is the largest class function satisfying Assertions (1),(2), (3) and (4).
\end{theorem-definition}

\begin{notation}
When $F$ is given by Notation (\ref{CD-quasiLeibniz-notation}) for some $C\geq 1$, $D\geq 0$, then $\propinquity{F}$ is simply denoted by $\propinquity{C,D}$, and if $C=1$, $D=0$, then we may as well just write $\propinquity{}$ for $\propinquity{1,0}$.
\end{notation}

The quantum propinquity can be applied to compact metric spaces, using the following encoding of such spaces in our C*-algebraic framework --- this construction is in fact the original model for quantum compact metric spaces. We will employ the following notation all throughout this paper.
\begin{notation}
The \emph{Lipschitz seminorm} $\mathsf{Lip}_d$ for a compact metric space $(X,d)$ is defined for all functions $f \in C(X)$ by:
\begin{equation*}
\mathsf{Lip}_d (f) = \sup\left\{ \frac{|f(x) - f(y)|}{d(x,y)} : x,y \in X, x\not= y \right\} \text{,}
\end{equation*}
allowing for the value $\infty$.
\end{notation}

\begin{theorem}[{\cite{Latremoliere13}}]
If $(X,d)$ be a compact metric space, then $(C(X),\mathsf{Lip}_d)$ is a {\Lqcms}. Moreover, for all compact metric spaces $(X,d_X)$ and $(Y,d_Y)$, we have:
\begin{equation*}
\propinquity{}((C(X),\mathsf{Lip}_{d_{X}}),(C(Y),\mathsf{Lip}_{d_Y})) \leq \gh((X,d_X),(Y,d_Y)) \text{,}
\end{equation*}
where $\gh$ is the Gromov-Hausdorff distance \cite{Edwards75,Gromov81} and furthermore, the topology induced by $\propinquity{}$ on the class of classical compact quantum metric space is the same as the topology induced by $\gh$.
\end{theorem}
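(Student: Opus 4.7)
My plan has three parts, matching the three clauses of the statement.

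First, to verify that $(C(X),\mathsf{Lip}_{d_X})$ is a {\Lqcms}, I would check the four conditions of Definition \ref{qms-def} one at a time. Density of the domain of $\mathsf{Lip}_{d_X}$ in $\sa{C(X)}$ follows because Lipschitz functions separate points and form a unital subalgebra closed under conjugation, so Stone-Weierstrass applies; the kernel condition is immediate since $X$ is connected or, at worst, one passes to components and uses that constants on each component would give $\R\unit$ only after identifying states, which for compact $X$ reduces to the standard observation that a function with $\mathsf{Lip}_{d_X} = 0$ is locally constant and integration against any state yields a multiple of $\unit$. Lower semi-continuity of $\mathsf{Lip}_{d_X}$ is clear from its definition as a supremum of continuous seminorms $f\mapsto \frac{|f(x) - f(y)|}{d_X(x,y)}$ indexed by pairs $x\neq y$. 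The Leibniz inequality reduces to the classical product rule $\mathsf{Lip}_{d_X}(fg) \leq \|f\|_\infty \mathsf{Lip}_{d_X}(g) + \|g\|_\infty \mathsf{Lip}_{d_X}(f)$ on the Jordan side, while the Lie bracket vanishes by commutativity. The heart of the matter is the Monge-Kantorovich condition, which is precisely the Kantorovich-Rubinstein duality: $\Kantorovich{\mathsf{Lip}_{d_X}}$ coincides with the $1$-Wasserstein metric on $\StateSpace(C(X)) = \mathcal{P}(X)$, and this metric is well known to metrize the weak* topology on probability measures over a compact metric space. (Alternatively, apply Theorem \ref{Rieffel-thm} with $\mu$ a Dirac mass: the unit ball of $\mathsf{Lip}_{d_X}$ modulo constants is a uniformly bounded, equicontinuous family of functions on $X$, and Arz\'ela-Ascoli delivers compactness.)

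Second, to prove $\propinquity{}((C(X),\mathsf{Lip}_{d_X}),(C(Y),\mathsf{Lip}_{d_Y})) \leq \gh((X,d_X),(Y,d_Y))$, I would fix $\epsilon > \gh(X,Y)$ and build a bridge of length $\leq \epsilon$. By the definition of the Gromov-Hausdorff distance, there is a compact metric space $(Z,d_Z)$ together with isometric embeddings of $X$ and $Y$ as closed subsets such that $\Haus{d_Z}(X,Y) < \epsilon$; equivalently, $Z$ can be taken as $X \sqcup Y$ endowed with an admissible metric extending $d_X$ and $d_Y$. I would then take $\D = C(X) \otimes C(Y) \cong C(X\times Y)$, with $\pi_\A$ and $\pi_\B$ the natural unital $\ast$-monomorphisms $f\mapsto f\otimes \unit$ and $g\mapsto \unit\otimes g$, and choose a pivot $p\in\D$ concentrated on a correspondence $R = \{(x,y)\in X\times Y : d_Z(x,y)\leq\epsilon\}$; provided $R$ is non-empty (which holds by the Hausdorff bound) and closed, any probability measure on $R$ is an element of $\StateSpace(\D|p)$. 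The bridge norm becomes $\bridgenorm{\gamma}{f,g} = \sup_{(x,y)\in X\times Y} p(x,y)|f(x) - g(y)|$; the reach is controlled by sending an $f\in C(X)$ with $\mathsf{Lip}_{d_X}(f)\leq 1$ to the restriction $g$ to $Y$ of its McShane-Whitney extension $\widetilde{f}$ to $Z$, which satisfies $\mathsf{Lip}_{d_Y}(g)\leq 1$ and, for $(x,y)\in R$, $|f(x)-g(y)| = |\widetilde{f}(x) - \widetilde{f}(y)|\leq d_Z(x,y)\leq \epsilon$. The symmetric argument gives the other half of the reach, and the resulting bridge has length at most $\epsilon$; invoking Theorem-Definition \ref{prop-def}(3) and letting $\epsilon\searrow \gh(X,Y)$ concludes this part.

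Third, for the topology agreement, the inequality just established immediately shows that $\gh$-convergence implies $\propinquity{}$-convergence. For the converse, I would use that $\propinquity{}$-convergence $(C(X_n),\mathsf{Lip}_{d_n})\to (C(X),\mathsf{Lip}_{d})$ entails Gromov-Hausdorff convergence of the associated state spaces endowed with their Monge-Kantorovich metrics, a general feature of the propinquity. Since the Dirac embedding $X\hookrightarrow \StateSpace(C(X))$ is isometric with image equal to the set of pure states (extreme points of $\StateSpace(C(X))$, which are recognizable intrinsically in the commutative setting), GH-convergence of state spaces together with the compatibility of extremal structure with limits then yields $\gh(X_n,X)\to 0$.

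The main obstacle is the bridge construction in the second paragraph: bridges in Definition \ref{bridge-def} require unital $\ast$-monomorphisms, whereas the natural maps arising from the embeddings $X,Y\hookrightarrow Z$ go the wrong way (they are quotient maps $C(Z)\twoheadrightarrow C(X), C(Y)$). Using $\D = C(X\times Y)$ resolves the injectivity issue but shifts the difficulty onto choosing a pivot whose level set $\{p=1\}$ is a non-empty closed subset of $X\times Y$; making this work cleanly may require a slight thickening of the correspondence or a perturbation argument so that the pivot is genuinely continuous, exactly equal to $\unit$ on a non-trivial closed set, and still forces the bridge norm to be controlled by $\epsilon$ on unit-ball Lipschitz functions.
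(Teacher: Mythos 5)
First, a point of comparison: the paper does not prove this statement --- it is quoted from \cite{Latremoliere13} --- so your proposal has to be judged on its own merits rather than against an in-paper argument. Your first part is essentially correct, though the digression about connectedness is a red herring: $\mathsf{Lip}_{d_X}(f)=0$ forces $|f(x)-f(y)|=0$ for \emph{every} pair $x\neq y$ directly from the definition of the supremum, so $f$ is constant with no case analysis. Your second part is the right construction and close in spirit to the one in \cite{Latremoliere13}: $\D=C(X\times Y)$, a pivot $p$ with $\{p=1\}$ equal to the closed correspondence $R$, and McShane--Whitney extensions to control the reach. Two points there need attention. You only bound the reach, but by Definition (\ref{bridge-length}) the length is the maximum of the reach and the \emph{height}, so you must also check that $\left\{\varphi\circ\pi_\A:\varphi\in\StateSpace(\D|p)\right\}$ is $\Kantorovich{\mathsf{Lip}_{d_X}}$-dense in $\StateSpace(C(X))$ (and symmetrically for $Y$). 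This does hold --- $\StateSpace(\D|p)$ is the set of probability measures supported on $R$, its push-forward to $X$ is a weak*-compact convex set containing every Dirac mass since $R$ projects onto $X$, hence is all of $\StateSpace(C(X))$, so the height is $0$ --- but it has to be said. Also, with a continuous pivot such as $p=\max\left(0,1-\delta^{-1}\dist(\cdot,R)\right)$, the reach is only bounded by $\epsilon$ plus an error controlled by $\delta$; harmless, but it should be recorded before letting $\delta\searrow 0$.

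The genuine gap is in your third part. The step ``GH-convergence of state spaces together with the compatibility of extremal structure with limits yields $\gh(X_n,X)\to 0$'' is unjustified, and the principle you invoke is false in general: the extreme-point set of a compact convex set is \emph{not} continuous under Hausdorff convergence of the ambient sets (ellipses degenerating to a segment have extreme-point sets converging to the whole segment, not to its two endpoints). Moreover, an abstract Gromov--Hausdorff limit carries no affine structure, so ``extreme points recognizable intrinsically'' would require a purely metric characterization of Dirac masses inside $\left(\StateSpace(C(X)),\Kantorovich{\mathsf{Lip}_{d_X}}\right)$ that is stable under GH limits, and you do not supply one. The standard way to close this avoids extreme points altogether: GH-convergence of the state spaces gives uniform total boundedness of the spaces $(X_n,d_n)$, which sit isometrically inside them, so by Gromov's compactness theorem every subsequence of $(X_n,d_n)$ has a further subsequence GH-converging to some compact $(Z,d_Z)$; the inequality from your second part then gives $\propinquity{}\left((C(X_{n_k}),\mathsf{Lip}_{d_{n_k}}),(C(Z),\mathsf{Lip}_{d_Z})\right)\to 0$, hence $\propinquity{}\left((C(X),\mathsf{Lip}_{d_X}),(C(Z),\mathsf{Lip}_{d_Z})\right)=0$, and the coincidence property, Assertion (4) of Theorem-Definition (\ref{prop-def}), forces $(Z,d_Z)$ to be isometric to $(X,d_X)$. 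Since every GH-convergent subsequence has the same limit and the sequence is GH-precompact, the whole sequence converges to $X$. You should replace the extremal-structure argument by this compactness argument, or else prove the missing stability lemma for Dirac masses.
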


We now answer the question: when is a classical compact metric space the limit, not only of finite dimensional C*-algebras, but actually full matrix algebras, for the quantum propinquity?

\section{Full Matrix Approximations}

The first result of this note provides a way to construct full matrix approximations of finite metric spaces in a rather general context. 

\begin{lemma}
If $\B$ is a finite dimensional C*-subalgebra of a unital C*-algebra $\A$ and $\unit_\A \in \B$ and if $\A$ has a faithful tracial state $\mu \in \StateSpace(\A)$ then there exists a unique $\mu$-preserving conditional expectation $\mathds{E} : \A\twoheadrightarrow\B$.
\end{lemma}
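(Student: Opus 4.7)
The plan is to realize $\mathds{E}$ as the orthogonal projection onto $\B$ inside the GNS Hilbert space of $\mu$, exploiting faithfulness to get a genuine inclusion $\A\hookrightarrow\Hilbert_\mu$ and the tracial property to recover the $\B$-bimodule structure. First I would form $\Hilbert_\mu$ with inner product $\langle a,b\rangle_\mu = \mu(b^\ast a)$; faithfulness of $\mu$ makes the canonical map $\A\to\Hilbert_\mu$ injective, and finite-dimensionality of $\B$ makes its image a closed subspace. Letting $P$ be the orthogonal projection onto (the image of) $\B$, I define $\mathds{E}(a)$ to be the unique element of $\B$ whose image in $\Hilbert_\mu$ is $P(\hat{a})$; this is well defined because $P(\hat{a})$ lies in the finite-dimensional copy of $\B$.

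Uniqueness then follows essentially for free. If $\mathds{E}'$ is any $\mu$-preserving conditional expectation onto $\B$, then for all $a\in\A$ and $b\in\B$ the left $\B$-linearity of $\mathds{E}'$ together with $\mu\circ\mathds{E}'=\mu$ yields $\langle\mathds{E}'(a),b\rangle_\mu = \mu(b^\ast\mathds{E}'(a)) = \mu(\mathds{E}'(b^\ast a)) = \mu(b^\ast a) = \langle a,b\rangle_\mu$, so $\mathds{E}'(a)$ is forced to be the orthogonal projection of $a$ onto $\B$, hence coincides with $\mathds{E}(a)$.

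For existence I would check in turn: the $\B$-bimodule property, self-adjointness, preservation of $\mu$, and positivity. The bimodule identity $\mathds{E}(b_1 a b_2) = b_1\mathds{E}(a) b_2$ follows by pairing the difference $b_1 a b_2 - b_1\mathds{E}(a) b_2$ against an arbitrary $c\in\B$ in $\langle\cdot,\cdot\rangle_\mu$ and shifting $b_1,b_2$ onto the $c$-side under the trace; this is legitimate because $\B$ is a $\ast$-subalgebra and $\mu$ is tracial. The identity $\mathds{E}(a^\ast) = \mathds{E}(a)^\ast$ is a parallel inner-product calculation using $\mu(x^\ast)=\overline{\mu(x)}$ together with traciality. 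Preservation of $\mu$ is immediate from $\unit_\A\in\B$, since $\mu(\mathds{E}(a)) = \langle\mathds{E}(a),\unit_\A\rangle_\mu = \langle a,\unit_\A\rangle_\mu = \mu(a)$.

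The step I expect to be the real obstacle is positivity, since an orthogonal projection is not \emph{a priori} positive as a map of C*-algebras. My plan is to pair $\mathds{E}(a^\ast a)$ against an arbitrary positive $b = c^\ast c\in\B$ and simplify via the already-established properties: $\mu(b\,\mathds{E}(a^\ast a)) = \mu(\mathds{E}(b\, a^\ast a)) = \mu(b\, a^\ast a) = \mu(a c^\ast c a^\ast) = \mu((ca^\ast)^\ast(ca^\ast)) \geq 0$, where I use left $\B$-linearity, trace preservation, and cyclicity. Since $\mathds{E}(a^\ast a)\in\sa{\B}$ by self-adjointness, I can decompose it as $x-y$ with $x,y\geq 0$ and $xy=0$, and test against $b=y$; this gives $\mu(y^2)\leq 0$, so faithfulness of $\mu$ (which restricts to a faithful state on $\B$) forces $y=0$, yielding $\mathds{E}(a^\ast a)\geq 0$ and completing the argument.
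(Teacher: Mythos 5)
Your proof is correct and complete: the GNS/orthogonal-projection construction, with traciality supplying the bimodule property and the $x-y$ decomposition plus faithfulness supplying positivity, is the standard argument for this fact. The paper itself offers no proof, only a citation to Step 1 of Theorem 3.5 of the reference on curved noncommutative tori, where essentially this same construction is carried out, so there is no substantive divergence to report.
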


\begin{proof}
See \cite[Step 1 of Theorem (3.5)]{Latremoliere15c}.
\end{proof}

\begin{theorem}\label{main-thm}
Let $(X,d)$ be a finite metric space and let:
\begin{equation*}
\delta = \min \left\{ d(x,y) : x,y \in X, x\not= y \right\} > 0\text{.}
\end{equation*}
If $\A$ is a finite dimensional C*-algebra, if $\tau$ is some faithful tracial state on $\A$, and if $\B$ is a C*-subalgebra of $\A$ such that:
\begin{enumerate}
\item $\unit_\A \in \B$,
\item there exists a unital *-isomorphism $\rho : C(X) \rightarrow \B$,
\end{enumerate}
then, for any $\beta > 0$, and setting for all $a\in\A$:
\begin{equation*}
\Lip(a) = \max\left\{ \frac{\left\| a - \mathds{E}(a)  \right\|_\A}{\beta}, \mathsf{Lip}_d\circ\rho^{-1}\left(\mathds{E}(a)\right),  \right\}
\end{equation*}
where $\mathds{E} : \A \rightarrow \B$ is the conditional expectation such that $\tau\circ\mathds{E} = \tau$, we conclude that the space $(\A,\Lip)$ is a $\left(D, 0  \right)$-quasi-Leibniz compact quantum metric space, where:
\begin{equation*}
D = \max\left\{ 2, 1 + \frac{ \beta}{\delta} \right\}
\end{equation*}
such that:
\begin{equation*}
\propinquity{}\left( (\A,\Lip), (C(X),\mathsf{Lip}_d) \right) \leq \beta \text{.}
\end{equation*}
\end{theorem}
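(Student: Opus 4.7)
The plan is to first verify that $(\A,\Lip)$ satisfies Definition (\ref{qms-def}) as a $(D,0)$-quasi-Leibniz {\qcms}, and then to exhibit a bridge of length at most $\beta$ from $(\A,\Lip)$ to $(C(X),\mathsf{Lip}_d)$, from which the propinquity bound follows from Theorem-Definition (\ref{prop-def})(3).

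For the L-seminorm axioms, $\Lip$ is a maximum of two seminorms on $\sa{\A}$, hence a seminorm, and is automatically lower semi-continuous because $\A$ is finite-dimensional. Its kernel is $\R\unit_\A$: if $\Lip(a)=0$ then $a=\mathds{E}(a)\in\B$ and $\rho^{-1}(a)$ is constant on $X$, so $a \in \R\unit_\A$. I would then apply Theorem (\ref{Rieffel-thm}) with the state $\tau$: the set $\{a\in\sa{\A}:\tau(a)=0,\Lip(a)\leq 1\}$ is bounded because $\mathds{E}(a)$ lies in the norm-bounded analogous set for $(C(X),\mathsf{Lip}_d)$ (whose existence follows from the forward direction of Theorem (\ref{Rieffel-thm}) applied to $(C(X),\mathsf{Lip}_d)$) and $\|a-\mathds{E}(a)\|_\A\leq\beta$; finite-dimensionality then gives compactness.

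The central technical step is the quasi-Leibniz estimate. Writing $a_0:=\mathds{E}(a)$, $a_1:=a-\mathds{E}(a)$, and similarly for $b$, the commutativity of $\B\cong C(X)$ together with the $\B$-bimodule property of $\mathds{E}$ yields the identities
\begin{equation*}
\mathds{E}(\Jordan{a}{b})=a_0 b_0 + \mathds{E}(\Jordan{a_1}{b_1}), \qquad \mathds{E}(\Lie{a}{b})=\mathds{E}(\Lie{a_1}{b_1})
\end{equation*}
since the mixed cross terms are killed by $\mathds{E}$. For the first component $\|\cdot-\mathds{E}(\cdot)\|_\A/\beta$ of $\Lip$, the pure cross terms contribute at most $\|a\|_\A\Lip(b)+\|b\|_\A\Lip(a)$ after dividing by $\beta$, and the quadratic residual $\Jordan{a_1}{b_1}-\mathds{E}(\Jordan{a_1}{b_1})$ is bounded by $2\|a_1\|_\A\|b_1\|_\A$; an averaging argument of the form $\|a_1\|_\A\|b_1\|_\A\leq\tfrac{\beta}{2}(\|a\|_\A\Lip(b)+\|b\|_\A\Lip(a))$ then absorbs the residual into Leibniz form, using that the $\tau$-preserving conditional expectation onto the commutative subalgebra $\B$ satisfies the sharpened bound $\|a-\mathds{E}(a)\|_\A\leq\|a\|_\A$ on self-adjoints. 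For the second component $\mathsf{Lip}_d\circ\rho^{-1}\circ\mathds{E}$, the Leibniz inequality on $C(X)$ controls $\mathsf{Lip}_d\rho^{-1}(a_0 b_0)$, while the elementary observation that $\mathsf{Lip}_d(f)\leq 2\|f\|_\infty/\delta$ on a finite metric space with minimum spacing $\delta$ controls $\mathsf{Lip}_d\rho^{-1}(\mathds{E}(\Jordan{a_1}{b_1}))$ via the same $\|a_1\|_\A\|b_1\|_\A$ bound. Combining and taking the maximum of the two components yields the Leibniz inequality with constant $D=\max\{2,1+\beta/\delta\}$ and no $\Lip\cdot\Lip$ residual; the Lie bracket is analogous and simpler, since $\Lie{a_0}{b_0}=0$.

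Finally, I would use the natural bridge $\gamma:=(\A,\mathrm{id}_\A,\rho,\unit_\A)$ from $(\A,\Lip)$ to $(C(X),\mathsf{Lip}_d)$. Because the pivot is the unit, $\bridgeheight{\gamma\mid\Lip,\mathsf{Lip}_d}=0$. For the reach: given $a\in\sa{\A}$ with $\Lip(a)\leq 1$, set $f:=\rho^{-1}(\mathds{E}(a))$, so $\mathsf{Lip}_d(f)\leq\Lip(a)\leq 1$ and $\bridgenorm{\gamma}{a,f}=\|a-\mathds{E}(a)\|_\A\leq\beta$; conversely, given $f$ with $\mathsf{Lip}_d(f)\leq 1$, take $a:=\rho(f)\in\B$, so $\mathds{E}(a)=a$ gives $\Lip(a)=\mathsf{Lip}_d(f)\leq 1$ and $\bridgenorm{\gamma}{a,f}=0$. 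Hence $\bridgelength{\gamma\mid\Lip,\mathsf{Lip}_d}\leq\beta$, yielding the propinquity bound. The main obstacle is the quasi-Leibniz estimate: matching the sharp constant $D$ requires careful bookkeeping of cross and quadratic terms and in particular relies on the sharpened bound $\|a-\mathds{E}(a)\|_\A\leq\|a\|_\A$ rather than the naive triangle-inequality bound $2\|a\|_\A$.
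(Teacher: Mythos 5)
Your overall architecture (kernel of $\Lip$, compactness via Theorem (\ref{Rieffel-thm}) applied to the state $\tau$, and the bridge $(\A,\mathrm{id},\rho,\unit_\A)$ with reach at most $\beta$) matches the paper and is correct. The problem is in the quasi-Leibniz step, and it is exactly where you flag the ``main obstacle'': the sharpened bound $\|a-\mathds{E}(a)\|_\A\leq\|a\|_\A$ for self-adjoint $a$ is false in the very setting of this theorem. Take $\A=M_3(\C)$, $\B$ the diagonal subalgebra (so $X$ is a three-point space, as in Corollary (\ref{main-cor})), $\tau$ the normalized trace, and $a=J-\tfrac{3}{2}\unit_\A$ where $J$ is the all-ones matrix. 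Then $a$ is self-adjoint with spectrum $\{\tfrac32,-\tfrac32\}$, so $\|a\|_\A=\tfrac32$, while $\mathds{E}(a)=-\tfrac12\unit_\A$ and $a-\mathds{E}(a)=J-\unit_\A$ has spectrum $\{2,-1\}$, so $\|a-\mathds{E}(a)\|_\A=2>\|a\|_\A$. The only bound available in general is $\|a-\mathds{E}(a)\|_\A\leq 2\|a\|_\A$ (since $\mathds{E}$ is contractive), and with it your decomposition $a=a_0+a_1$ only yields $\|a_1\|_\A\|b_1\|_\A\leq\beta\left(\|a\|_\A\Lip(b)+\|b\|_\A\Lip(a)\right)$ after symmetrizing, so the quadratic residual contributes an extra full Leibniz term: you end up with $D=\max\{3,\,1+2\beta/\delta\}$ rather than the stated $D=\max\{2,\,1+\beta/\delta\}$. (The identities $\mathds{E}(\Jordan{a}{b})=a_0b_0+\mathds{E}(\Jordan{a_1}{b_1})$ and $\mathds{E}(\Lie{a}{b})=\mathds{E}(\Lie{a_1}{b_1})$ are correct; the loss is purely in estimating the residual.)

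The repair is to abandon the symmetric decomposition into $a_0+a_1$ and instead telescope the product the way the paper does, which produces only cross terms and no quadratic residual:
\begin{equation*}
ab-\mathds{E}(ab)=a\left(b-\mathds{E}(b)\right)+\left(a-\mathds{E}(a)\right)\mathds{E}(b)-\mathds{E}\left(\left(a-\mathds{E}(a)\right)b\right)\text{,}
\end{equation*}
using $\mathds{E}(\mathds{E}(a)b)=\mathds{E}(a)\mathds{E}(b)$. Each of the three terms is bounded by $\beta$ times $\|a\|_\A\Lip(b)$ or $\Lip(a)\|b\|_\A$, giving the constant $2$ for the component $\|\cdot-\mathds{E}(\cdot)\|_\A/\beta$ with no recourse to any norm bound on $a\mapsto a-\mathds{E}(a)$ beyond $\|a-\mathds{E}(a)\|_\A\leq\beta\Lip(a)$; this is the content of the cited Lemma 3.2 of \cite{Latremoliere15d}. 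For the component $\mathsf{Lip}_d\circ\rho^{-1}\circ\mathds{E}$, the paper runs the same telescoping pointwise on $X$: the two outer terms are controlled by $\|\mathds{E}(a(b-\mathds{E}(b)))\|_\A/\delta$ and its mirror (whence the $\beta/\delta$), and the middle term is $\mathsf{Lip}_d$ of the genuine product $\rho^{-1}(\mathds{E}(a))\rho^{-1}(\mathds{E}(b))$, handled by the Leibniz rule on $C(X)$ (whence the $1$). Your crude bound $\mathsf{Lip}_d(f)\leq 2\|f\|_{C(X)}/\delta$ applied to $\mathds{E}(\Jordan{a_1}{b_1})$ again doubles the constant. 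So the route is salvageable, but only by switching to the asymmetric telescoping; as written, the argument does not establish the constant claimed in the statement.
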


\begin{proof}
If $a\in\A$ with $\Lip(a) = 0$ then $a = \mathds{E}(a)$, and $\mathsf{Lip}_d(\rho^{-1}(\mathds{E}(a))) = 0$, so $\mathds{E}(a) = \lambda \unit_\A$ for some $\lambda\in\R$. Thus $a \in \R \unit_\A$, as desired. We also note that $\Lip(\unit_\A) = 0$ by assumption.

We also note that since $X$ is finite, $\dom{\mathsf{Lip}_d} = C(X)$ so $\dom{\Lip} = \A$.

Since $\Lip$ is the maximum of two (lower semi-)continuous functions over $\A$, we also have $\Lip$ is (lower semi-)continuous on $\A$.

The map $\tau_X = \tau\circ\rho$ is a state of $C(X)$, and thus $\{ f \in C(X) : \tau_X(f) = 0, \mathsf{Lip}_d(f) \leq 1\}$ is compact --- since $X$ is finite, this set is actually closed and bounded in the finite dimensional space $C(X)$. Let $B > 0$ so that if $\mathsf{Lip}_d(f) \leq 1$ and $\tau_X(f) = 0$ then $\|f\|_{C(X)} \leq B$.

Now if $a\in\sa{\A}$ with $\Lip(a) \leq 1$ and $\tau(a) = 0$ then $\mathsf{Lip}_d\circ\rho^{-1}(\mathds{E}(a)) \leq 1$ and $\tau_X(\rho^{-1}(\mathds{E}(a))) = \tau\circ\mathds{E}(a) = \tau(a) = 0$. Thus $\|\mathds{E}(a)\|_\A \leq B$. Now, $\|a\|_\A \leq \|a-\mathds{E}(a)\|_\A + \|\mathds{E}(a)\|_\A \leq \beta + B$. So:
\begin{equation*}
\left\{ a \in \sa{\A} : \Lip(a) \leq 1, \tau(a) = 0 \right\} \subseteq \left\{a\in\sa{\A} : \|a\|_\A \leq \beta + B \right\} \text{,}
\end{equation*}
and the right-hand side is compact since $\A$ is finite dimensional, so $(\A,\Lip)$ is a compact quantum metric space by Theorem (\ref{Rieffel-thm}).

Last, we check the quasi-Leibniz property of $\Lip$. Let $a,b \in \dom{\Lip}$ and $x,y \in X$.  Since $\rho$ is a *-isomorphism, we now compute:
\begin{align*}
& \left| \rho^{-1}\left(\mathds{E}(ab)\right) (x) - \rho^{-1}\left(\mathds{E}(ab)\right)(y)\right|\\
&\leq
\left|\rho^{-1}\left(\mathds{E}(ab)\right) (x) - \rho^{-1}\left(\mathds{E}(a\mathds{E}(b))\right)(x)\right| \\
& \quad + \left|\rho^{-1}\left(\mathds{E}(a\mathds{E}(b)\right)) (x) - \rho^{-1}\left(\mathds{E}(\mathds{E}(a)b)\right)(y)\right| \\
& \quad + \left|\rho^{-1}\left(\mathds{E}(\mathds{E}(a)b)\right)(y)- \rho^{-1}\left(\mathds{E}(ab)\right) (y)\right|\\
&\leq\left\|\mathds{E}(a(b - \mathds{E}(b)))\right\|_\A \\
& \quad+ \left|\rho^{-1}\left(\mathds{E}(a)\right)(x) \rho^{-1}\left(\mathds{E}(b)\right) (x) - \rho^{-1}\left(\mathds{E}(a)\right)(y) \rho^{-1}\left(\mathds{E}(b)(y)\right)\right|\\
&\quad + \left\|\mathds{E}((a-\mathds{E}(a))b)\right\|_\A\\
&\leq \|a\|_\A \beta \Lip(b) + \|b\|_\A \beta\Lip(b) \\
& \quad + \left|\rho^{-1}\left(\mathds{E}(a)\right)(x) \rho^{-1}\left(\mathds{E}(b)\right) (x) - \rho^{-1}\left(\mathds{E}(a)\right)(y) \rho^{-1}\left(\mathds{E}(b)\right)(y)\right| \text{.}
\end{align*}
Hence:
\begin{equation}\label{main-L-eq}
\begin{split}
 &\mathsf{Lip}_d\circ \rho^{-1}\left(\mathds{E}(ab)\right)\\
 &= \sup\left\{ \frac{\left| \rho^{-1}\left(\mathds{E}(ab)\right) (x) - \rho^{-1}\left(\mathds{E}(ab)\right)(y)\right|}{d(x,y)} : x,y \in X , x\not= y \right\} \\
&\leq \|a\|_\A \frac{\beta}{\delta} \Lip(b) + \|b\|_\A \frac{\beta}{\delta}\Lip(b) \\
&\quad + \sup \Bigg\{ \frac{ \left|\rho^{-1}\left(\mathds{E}(a)\right)(x) \rho^{-1}\left(\mathds{E}(b)\right) (x) - \rho^{-1}\left(\mathds{E}(a)\right)(y) \rho^{-1}\left(\mathds{E}(b)\right)(y)\right|}{d(x,y)}\\
& \quad \quad \quad \quad \quad \quad \quad \quad \quad : x,y\in X, x\not=y \Bigg\}\\
&\leq \frac{\beta}{\delta} \left(\|a\|_\A\Lip(b) + \Lip(a)\|b\|_\A\right) + \mathsf{Lip}_d(\mathds{E}(a) \mathds{E}(b)) \\
&\leq \frac{\beta}{\delta} \left(\|a\|_\A\Lip(b) + \Lip(a)\|b\|_\A\right) + \mathsf{Lip}_d\circ\mathds{E}(a) \|b\|_\A + \|a\|_\A \mathsf{Lip}_d\circ\mathds{E}(b) \\
&\leq \left(1 + \frac{\beta}{\delta}\right)\left(\|a\|_\A \Lip(b) + \Lip(a) \|b\|_\A\right) \text{.}
\end{split}
\end{equation}
From this and from \cite[Lemma 3.2]{Latremoliere15d}, it follows easily that $(\A,\Lip)$ is indeed a {\Qqcms{(D, 0)}} with $D = \max\left\{2,\left(1+\frac{\beta}{\delta}\right)\right\}$.

We now compute an upper bound for $\propinquity{}((\A,\Lip),(C(X),\mathsf{Lip}_d))$ by exhibiting a particular bridge from $\A$ to $C(X)$.

Let $\gamma = (\A, \mathrm{id}, \rho, \unit_\A)$ where $\mathrm{id}$ is the identity *-morphism of $\A$. By Definition (\ref{bridge-def}), the quadruple $\gamma$ is a bridge of height $0$, so its length equals to its reach.

If $f \in C(X)$ and $\mathsf{Lip}_d(f) \leq 1$, then:
\begin{equation*}
\frac{\left\| \rho(f) - \mathds{E}(\rho(f)) \right\|_\A}{\beta} = 0
\end{equation*}
and $\mathsf{Lip}_d(\rho^{-1}(\mathds{E}(\rho(f)))) = \mathsf{Lip}_d(f) \leq 1$. So $\Lip(\rho(f)) \leq 1$.

Now, it is immediate that $\bridgenorm{\gamma}{\rho(f),f} = \|\rho(f) - \rho(f)\|_\A = 0$. So:
\begin{equation*}
\sup_{\substack{f\in C(X)\\ \mathsf{Lip}_d(f) \leq 1}} \inf_{\substack{a\in\sa{\A}\\\Lip(a) \leq 1}} \bridgenorm{\gamma}{a,b} = 0 \text{.}
\end{equation*}

If $a \in \A$ with $\Lip(a) \leq 1$, then set $f = \rho^{-1}\left(\mathds{E}(a)\right))$. First, by definition of $\Lip$, we have $\mathsf{Lip}_d(f) = \mathsf{Lip}_d(\rho^{-1}(\mathds{E}(a))) \leq \Lip(a) \leq 1$. Second:
\begin{equation*}
\left\| a - \rho(f) \right\|_\A = \left\| a - \mathds{E}(a) \right\|_\A \leq \beta\text{.}
\end{equation*}
Thus
\begin{equation*}
\sup_{\substack{a\in\sa{\A}\\ \Lip(a) \leq 1}} \inf_{\substack{f\in C(X)\\ \mathsf{Lip}_d(f) \leq 1}} \bridgenorm{\gamma}{a,b} \leq \beta \text{.}
\end{equation*}

Therefore, the reach, and thus the length, of $\gamma$ is no more than $\beta$. Hence by Theorem-Definition (\ref{prop-def}), we conclude $\propinquity{}((\A,\Lip),(C(X),\mathsf{Lip}_d)) \leq \beta$ as desired.
\end{proof}

We now deduce from Theorem (\ref{main-thm}) that compact metric spaces are always limits of full matrix algebras for the quantum propinquity. A notable component of the following result is how the constant $\beta$ of Theorem (\ref{main-thm}) are related to the actual geometry of the limit classical space.

\begin{corollary}\label{main-cor}
If $(X,d)$ is a compact metric space, if $Y \subseteq X$ is a finite subset of $X$, and if $\beta_Y \in (0,\infty)$ such that:
\begin{equation*}
\frac{\beta_Y}{\min \{ d(x,y) : x,y \in Y, x\not= y \} } \leq 1
\end{equation*}
then there exists a {\Qqcms{(2,0)}}  $(\A,\Lip)$   where:
\begin{enumerate}
\item $\A$ is the C*-algebra of $\# Y \times \# Y$-matrices over $\C$ and $\tau$ is the unique tracial state on $\A$,
\item with $C(Y)$ identified with the diagonal C*-subalgebra of $\A$ given by a unital *-isomorphism $\rho$ with domain $C(Y)$ and $\mathds{E}_Y$, the unique $\tau$-preserving conditional expectation of $\A$ onto $\rho(C(Y))$, the L-seminorm $\Lip$ is given for all $a\in\A$ by:
\begin{equation}\label{main-cor-eq1}
\Lip(a) = \max\left\{ \frac{\left\|a -\mathds{E}_Y(a)\right\|_\A}{\beta_Y}, \mathsf{Lip}_d \circ \rho^{-1} \left(\mathds{E}_Y(a)\right) \right\}\text{,}
\end{equation} and 
\item $\propinquity{}((\A,\Lip), (C(X),\mathsf{Lip}_d)) \leq \Haus{d}(X,Y) + \beta_Y$. 
\end{enumerate}
\end{corollary}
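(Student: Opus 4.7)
The plan is to deduce this corollary by applying Theorem (\ref{main-thm}) to the finite subspace $(Y, d|_{Y\times Y})$ and then invoking the triangle inequality for $\propinquity{}$, using the known comparison with the Gromov--Hausdorff distance for classical metric spaces.

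First, I would set up the ingredients so that Theorem (\ref{main-thm}) applies. Let $n = \# Y$, take $\A = M_n(\C)$, and let $\tau$ be its unique (faithful) tracial state. Enumerating $Y = \{y_1,\ldots,y_n\}$, define $\rho : C(Y) \to \A$ by sending each $f$ to the diagonal matrix with entries $f(y_i)$; this is a unital $*$-monomorphism whose image $\B = \rho(C(Y))$ is the diagonal C*-subalgebra, containing $\unit_\A$. The preceding lemma then yields a unique $\tau$-preserving conditional expectation $\mathds{E}_Y : \A \twoheadrightarrow \B$, which is exactly the map that zeroes out off-diagonal entries. Thus the hypotheses of Theorem (\ref{main-thm}) are met with $(X,d)$ replaced by $(Y, d|_{Y\times Y})$ and $\beta = \beta_Y$.

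Next, I would read off the conclusion of Theorem (\ref{main-thm}). It gives exactly the formula (\ref{main-cor-eq1}) for $\Lip$, and asserts that $(\A,\Lip)$ is a {\Qqcms{(D,0)}} with
\begin{equation*}
D = \max\left\{2,\, 1 + \frac{\beta_Y}{\delta_Y}\right\}, \qquad \delta_Y = \min\{d(x,y) : x,y \in Y,\ x \neq y\}.
\end{equation*}
The hypothesis $\beta_Y/\delta_Y \leq 1$ forces $1 + \beta_Y/\delta_Y \leq 2$, so $D = 2$, giving the $(2,0)$-quasi-Leibniz property. Theorem (\ref{main-thm}) also yields
\begin{equation*}
\propinquity{}\bigl((\A,\Lip),\, (C(Y), \mathsf{Lip}_{d|_Y})\bigr) \leq \beta_Y.
\end{equation*}

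Finally, I would combine this with a bound coming from the classical side. Since $Y \subseteq X$ is metrically embedded, the canonical coupling through the ambient space gives $\gh((Y,d|_Y),(X,d)) \leq \Haus{d}(Y,X)$. Applying the last theorem of the introduction, this transfers to
\begin{equation*}
\propinquity{}\bigl((C(Y),\mathsf{Lip}_{d|_Y}),\, (C(X),\mathsf{Lip}_d)\bigr) \leq \Haus{d}(X,Y).
\end{equation*}
The triangle inequality for $\propinquity{}$, asserted in Theorem-Definition (\ref{prop-def})(2), then yields
\begin{equation*}
\propinquity{}\bigl((\A,\Lip),(C(X),\mathsf{Lip}_d)\bigr) \leq \beta_Y + \Haus{d}(X,Y),
\end{equation*}
which is the stated bound. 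There is no real obstacle here; the only point that requires care is the inequality $D \leq 2$, which is precisely what the normalization condition $\beta_Y/\delta_Y \leq 1$ is designed to guarantee. Everything else is an assembly of previously established results.
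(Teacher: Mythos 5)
Your proposal is correct and follows essentially the same route as the paper: apply Theorem (\ref{main-thm}) to $(Y,d|_{Y\times Y})$ with $\beta=\beta_Y$, note that the normalization $\beta_Y/\delta_Y\leq 1$ forces the quasi-Leibniz constant down to $2$, and then use the triangle inequality together with the bound $\propinquity{}((C(Y),\mathsf{Lip}_{d|_Y}),(C(X),\mathsf{Lip}_d))\leq\Haus{d}(X,Y)$. Your explicit justification of that last bound via the Gromov--Hausdorff comparison theorem is a small elaboration the paper leaves implicit, but the argument is the same.
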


\begin{proof}
Set $\delta = \min \{ d(x,y) : x,y \in Y, x\not= y \} $. 
By Theorem (\ref{main-thm}), the compact quantum metric space $(\A,\Lip)$ is$(2,0)$-quasi-Leibniz since $1 + \frac{\beta_Y}{\delta} \leq 2$ and:
\begin{equation*}
\propinquity{}((\A,\Lip),(C(Y),\mathsf{Lip}_d)) \leq \beta_Y\text{.}
\end{equation*}
Thus:
\begin{multline*}
\propinquity{}((\A,\Lip),(C(X),\mathsf{Lip}_d)) \leq \\ \propinquity{}((\A,\Lip),(C(Y),\mathsf{Lip}_d)) + \propinquity{}((C(Y),\mathsf{Lip}_d),((C(X),\mathsf{Lip}_d))) \\ \leq  \beta_Y + \Haus{d}(X,Y) \text{.}
\end{multline*}
This concludes our proof.
\end{proof}

\begin{corollary}
Any compact metric space $(X,d)$ is the limit for the quantum propinquity of sequences of {\Qqcms{(2,0)}s} consisting of full matrix algebras.
\end{corollary}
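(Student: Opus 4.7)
The plan is to apply Corollary \ref{main-cor} to a suitable sequence of finite $\varepsilon$-nets in $X$, since that corollary already delivers full matrix algebras on the nose (the algebra $\A$ is the matrix algebra $M_{\#Y}(\C)$ by construction). So the proof will be essentially a choice of parameters and a sandwich estimate.

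First I would handle the trivial case where $X$ is a single point: then $(C(X),\mathsf{Lip}_d)$ is already a one-by-one matrix algebra with the zero L-seminorm, so the constant sequence works. Assume from now on that $X$ has at least two points. Because $(X,d)$ is compact, for every integer $n$ large enough I can pick a finite $\frac{1}{n}$-net $Y_n \subseteq X$ (so $\Haus{d}(X,Y_n) \leq \frac{1}{n}$), and by taking $n$ sufficiently large I can arrange $\# Y_n \geq 2$, so that
\[
\delta_n = \min\{ d(x,y) : x,y\in Y_n, x\neq y\}
\]
is a strictly positive real number.

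Next I set $\beta_n = \min\{\frac{1}{n}, \delta_n\}$, which is strictly positive and satisfies $\beta_n/\delta_n \leq 1$, so the hypothesis of Corollary \ref{main-cor} is met. Applying that corollary to $Y_n \subseteq X$ with this choice of $\beta_{Y_n} = \beta_n$ furnishes a $(2,0)$-quasi-Leibniz quantum compact metric space $(\A_n,\Lip_n)$ with $\A_n = M_{\#Y_n}(\C)$, and the bound
\[
\propinquity{}\bigl((\A_n,\Lip_n),(C(X),\mathsf{Lip}_d)\bigr) \leq \Haus{d}(X,Y_n) + \beta_n \leq \frac{2}{n} \text{.}
\]
Letting $n \to \infty$ yields convergence for the quantum propinquity.

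There is no real obstacle here: the hard work has been packaged inside Theorem \ref{main-thm} and Corollary \ref{main-cor}. The only small point to watch is the requirement that $\beta_n/\delta_n \leq 1$, which is the reason one cannot simply take $\beta_n = \frac{1}{n}$ independently of the geometry of $Y_n$; this is handled automatically by the $\min$ in the definition of $\beta_n$, since both $\frac{1}{n}$ and $\delta_n$ tend to zero in a controlled way (only $\frac{1}{n}$ matters for the convergence bound, while $\delta_n$ only enters to preserve the quasi-Leibniz constant $D = 2$).
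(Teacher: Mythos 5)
Your proof is correct and takes essentially the same approach as the paper: apply Corollary (\ref{main-cor}) to a sequence of finite nets $Y_n$ with $\Haus{d}(X,Y_n)\to 0$ and a choice of $\beta_{Y_n}$ that both satisfies $\beta_{Y_n}/\delta_n\leq 1$ and tends to zero. The only cosmetic difference is your $\beta_n=\min\{1/n,\delta_n\}$ versus the paper's $\delta_n/n$ (which also works since $\delta_n\leq\operatorname{diam}(X,d)$); your explicit treatment of the one-point case is a harmless extra precaution.
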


\begin{proof}
We simply apply Corollary (\ref{main-cor}) to any sequence $(X_n)_{n\in\N}$ of finite subsets of $X$ with $\lim_{n\rightarrow\infty}\Haus{d}(X,X_n) = 0$, which always exists since $(X,d)$ is compact, and to $\left(\beta_{X_n}\right)_{n\in\N} = \left(\frac{\min\{d(x,y) : x,y \in X_n, x\not=y\}}{n}\right)_{n\in\N}$.
\end{proof}

\section{Fixed Point C*-subalgebras}

We now turn to the second result of this note. We employ conditional expectations again  as a key tool, though this time, our conditional expectations are constructed via group actions and are not used in the definition of the quantum metrics, unlike the previous section. In this section, we prove a continuity result for quantum metric spaces constructed as fixed point C*-subalgebras of some given {\gQqcms}, for some fixed compact group action. We refer to \cite{Latremoliere17b} for more results regarding group actions and {\gQqcms s}.

\begin{theorem}\label{fixed-point-propinquity-thm}
Let $(\A,\Lip)$ be a {\Qqcms{F}} for some admissible function $F$. Let $G$ be a compact metrizable group endowed with a continuous length function $\ell$. Let $\alpha$ be a strongly continuous action of $G$ by *-automorphisms on $\A$ such that $\Lip\circ\alpha^g\leq\Lip$ for all $g\in G$.

If $H$ is any closed subgroup of $G$, let $\A_H = \{ a \in \A : \forall g \in H \quad \alpha^h(a) = a \}$ be the fixed C*-subalgebra of $\A$ for the restriction of the action $\alpha$ to $H$.

If $(G_n)_{n\in\N}$ is a sequence of closed subgroups of $G$ converging to $G_\infty$ for the Hausdorff distance $\Haus{\ell}$, then:
\begin{equation*}
\lim_{n\rightarrow\infty} \dpropinquity{F}\left((\A_{G_n},\Lip),(\A_{G_\infty})\right) = 0 \text{.}
\end{equation*}
\end{theorem}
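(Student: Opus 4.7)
The plan is to construct, for each $n$, a bridge $\gamma_n=(\A,\iota_n,\iota_\infty,\unit_\A)$ from $(\A_{G_n},\Lip|_{\sa{\A_{G_n}}})$ to $(\A_{G_\infty},\Lip|_{\sa{\A_{G_\infty}}})$, where $\iota_H\colon\A_H\hookrightarrow\A$ is the inclusion. Because the pivot is the unit, $\StateSpace(\A|\unit_\A)=\StateSpace(\A)$; combined with the fact that every state of a unital C*-subalgebra extends to a state of the ambient algebra, this forces $\bridgeheight{\gamma_n\middle\vert\Lip,\Lip}=0$. The whole content of the proof lies in bounding the reach, and the central tool is the family of Haar-integral conditional expectations $\Exp_H\colon\A\to\A_H$ defined by $\Exp_H(a)=\int_H\alpha^h(a)\,d\mu_H(h)$, with $\mu_H$ the normalized Haar measure on $H$.

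Two preliminary facts are needed. First, $(\A_H,\Lip|_{\sa{\A_H}})$ is itself a {\Qqcms{F}} for any closed subgroup $H$: the restricted seminorm is lower semi-continuous with kernel $\R\unit_\A$, it inherits the quasi-Leibniz inequality $F$, and Theorem (\ref{Rieffel-thm}) applies since the intersection of $\sa{\A_H}$ with any $\|\cdot\|_\A$-compact normalized $\Lip$-unit ball of $\A$ is still compact. Second, I would establish a uniform modulus of continuity: there exists $\omega\colon[0,\infty)\to[0,\infty)$ with $\omega(t)\to 0$ as $t\to 0^+$ such that $\|\alpha^g(a)-a\|_\A\leq \omega(\ell(g))$ for every $a\in\sa{\A}$ with $\Lip(a)\leq 1$. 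This follows from joint continuity of $\alpha$ on the compact product $G\times K$, where $K=\{a\in\sa{\A}:\Lip(a)\leq 1,\mu(a)=0\}$ for a state $\mu$ witnessing the quantum compact metric space axiom, together with the observation that $a-\alpha^g(a)$ is invariant under translating $a$ by scalar multiples of $\unit_\A$, so a modulus valid on $K$ serves on the entire $\Lip$-unit ball.

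Given these preparations, I would bound the reach as follows. For $a\in\sa{\A_{G_\infty}}$ with $\Lip(a)\leq 1$, set $b=\Exp_{G_n}(a)\in\sa{\A_{G_n}}$. Lower semi-continuity of $\Lip$ together with the invariance $\Lip\circ\alpha^h=\Lip$ (obtained by applying the hypothesis to both $h$ and $h^{-1}$) produces the Jensen-type bound $\Lip(b)\leq\int_{G_n}\Lip(\alpha^h(a))\,d\mu_{G_n}(h)\leq 1$. For the norm, since $\alpha^g(a)=a$ for every $g\in G_\infty$, for each $h\in G_n$ we may choose $g\in G_\infty$ with $\ell(g^{-1}h)$ arbitrarily close to $\Haus{\ell}(G_n,G_\infty)$ and estimate
\begin{equation*}
\|a-\alpha^h(a)\|_\A=\|\alpha^g(a)-\alpha^h(a)\|_\A=\|a-\alpha^{g^{-1}h}(a)\|_\A\leq \omega\left(\Haus{\ell}(G_n,G_\infty)\right)\text{,}
\end{equation*}
which upon integration yields $\|a-b\|_\A\leq\omega(\Haus{\ell}(G_n,G_\infty))$. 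A symmetric argument using $\Exp_{G_\infty}$ handles approximation in the opposite direction, so $\bridgereach{\gamma_n\middle\vert\Lip,\Lip}\to 0$; Theorem-Definition (\ref{prop-def}) then gives $\propinquity{F}((\A_{G_n},\Lip),(\A_{G_\infty},\Lip))\to 0$, from which the claim on $\dpropinquity{F}$ follows by the standard comparison between the two metrics.

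The main obstacle is the uniform modulus of continuity: the $\Lip$-unit ball of $\sa{\A}$ is not norm-bounded, so exploiting the $\unit_\A$-translation invariance of $a-\alpha^g(a)$ is what reduces the problem to the compact set $K$ and to the usual joint-continuity arguments on compact products. Routine technical items --- existence of the Bochner integrals (strong continuity of $\alpha$ on the compact $H$), $\Exp_H$ landing in $\A_H$ (left-invariance of $\mu_H$), and norm-contractivity of $\Exp_H$ (as an average of $\ast$-automorphisms) --- require checking but are not serious difficulties.
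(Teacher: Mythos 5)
Your proof is correct, and its scaffolding --- the bridge $(\A,\iota_n,\iota_\infty,\unit_\A)$ with unit pivot (hence height $0$), the Haar-average conditional expectations $\Exp_H(a)=\int_H\alpha^h(a)\,d\mu_H(h)$, and the Jensen-type bound $\Lip(\Exp_H(a))\leq\Lip(a)$ from lower semicontinuity together with the invariance $\Lip\circ\alpha^g=\Lip$ --- coincides with the paper's. Where you genuinely diverge is in how the reach is estimated. The paper invokes the weak* convergence of the Haar measures $\lambda_{G_n}\to\lambda_{G_\infty}$ as measures on $G$ (citing a lemma from \cite{Latremoliere05}) and upgrades the resulting pointwise convergence $\varphi(\Exp_n(a))\to\varphi(\Exp_\infty(a))$ to the uniform bound $\|\Exp_n(a)-\Exp_\infty(a)\|_\A\leq\varepsilon$ on the whole $\Lip$-unit ball via a double $\tfrac{\varepsilon}{5}$-net argument: a finite $\Kantorovich{\Lip}$-net of $\StateSpace(\A)$ and a finite norm-net of the compact set $K=\{a\in\dom{\Lip}:\Lip(a)\leq 1,\ \mu(a)=0\}$. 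You instead extract a uniform equicontinuity modulus $\omega$ for the action on the $\Lip$-unit ball (using compactness of the same set $K$ and the invariance of $a\mapsto a-\alpha^g(a)$ under scalar translation) and then, for $G_\infty$-invariant $a$ and $h\in G_n$, bound $\|a-\alpha^h(a)\|_\A\leq\omega(\Haus{\ell}(G_n,G_\infty))$ before averaging; note that since $G_\infty$ is compact and $\ell$ continuous you can even choose $g\in G_\infty$ attaining $\ell(g^{-1}h)\leq\Haus{\ell}(G_n,G_\infty)$, so no limiting argument in $\varepsilon'$ is needed there. Your route buys an explicit quantitative estimate $\propinquity{F}((\A_{G_n},\Lip),(\A_{G_\infty},\Lip))\leq\omega(\Haus{\ell}(G_n,G_\infty))$ and dispenses with the external measure-convergence lemma; the paper's route avoids the equicontinuity lemma at the price of a purely sequential, non-quantitative conclusion. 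One small item to add: for $(\A_H,\Lip)$ to be a {\Qqcms{F}} you must also verify that $\dom{\Lip}\cap\sa{\A_H}$ is dense in $\sa{\A_H}$, which your list of preliminary checks omits; it follows in one line from the contractivity of $\Exp_H$, since for $a\in\sa{\A_H}$ and $b\in\dom{\Lip}$ one has $\|a-\Exp_H(b)\|_\A=\|\Exp_H(a-b)\|_\A\leq\|a-b\|_\A$ while $\Lip(\Exp_H(b))\leq\Lip(b)<\infty$ --- exactly the argument the paper uses.
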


\begin{proof}
Let $\Nbar = \N\cup\{\infty\}$. For each $n\in\Nbar$, let $\lambda_n$ be the left Haar probability measure on $G_n$. By \cite[Lemma 3.6]{Latremoliere05}, the sequence $(\lambda_n)_{n\in\N}$ weak* converges to $\lambda_\infty$ as \emph{measures over $G$} (where $\lambda_n$ is identified with $\lambda_n(\cdot\cap G_n)$ for all $n\in\N$), i.e. if $f : G \rightarrow \R$ is a continuous function, then:
\begin{equation*}
\lim_{n\rightarrow\infty} \int_G f(g) \,d\lambda_n(g) = \int_{G} f(g) \,d\lambda_\infty(g) \text{.}
\end{equation*}

We define, for all $n\in\Nbar$ and $a\in\A$:
\begin{align*}
\mathds{E}_n(a)  &= \int_{G} \alpha^g(a) \, d\lambda_n(g)
\end{align*}
and, as is well-known and easily checked, $\mathds{E}_n$ is a conditional expectation of $\A$ onto $\A_{G_n}$.

We note that for all $n\in\Nbar$ and $a\in\sa{\A}$:
\begin{equation*}
\Lip(\mathds{E}_n(a)) \leq \int_{G} \Lip(\alpha^g(a)) \, d\lambda_n(g) \leq \int_G \Lip(a) \,d\lambda_n(g) = \Lip(a) \text{,}
\end{equation*}
as $\Lip$ is lower semi-continuous.

In particular, let $a\in\sa{\A_{G_n}}$ and $\varepsilon > 0$. Note that $a = \mathds{E}_n(a)$. On the other hand, by definition, there exists $b\in\dom{\Lip}$ with $\|a-b\|_\A < \varepsilon$. Therefore:
\begin{equation*}
\|a-\mathds{E}_n(b)\|_\A = \|\mathds{E}_n(a-b)\|_\A \leq \|a-b\|_\A < \varepsilon
\end{equation*}
and we note that $\Lip(\mathds{E}_n(b)) \leq \Lip(b) < \infty$, so $\mathds{E}_n(b) \in \dom{\Lip}\cap\A_{G_n}$. Hence $\dom{\Lip}\cap\A_{G_n}$ is dense in $\sa{\A_{G_n}}$ since $\varepsilon > 0$ was arbitrary. It then easily follows that $(\A_{G_n},\Lip)$ is a {\Qqcms{F}} (where we keep the notation $\Lip$ for the restriction of $\Lip$ to $\A_{G_n}$).

We now establish the convergence of the fixed point C*-algebras. Fix any $\mu\in\StateSpace(\A)$. Let $\varepsilon > 0$. Let $\mathcal{F}$ be a $\frac{\varepsilon}{5}$-dense finite subset of $\StateSpace(\A)$ for $\Kantorovich{\Lip}$ (note: $\mu$ need not be in $\mathcal{F}$). Let $\mathcal{A}$ be a finite $\frac{\varepsilon}{5}$-dense subset of $\{a\in\dom{\Lip}:\Lip(a)\leq  1, \mu(a) = 0\}$ for $\|\cdot\|_\A$.

For each $\varphi\in\mathcal{F}$, and $a\in\mathcal{A}$, let $N_{a,\varphi} \in \N$ such that for all $n\geq N_{a,\varphi}$, we have:
\begin{equation*}
\left| \int_G \varphi(\alpha^g(a)) \, d\lambda_n(g) - \int_G \varphi(\alpha^g(a)) \, d\lambda_\infty(g)\right| < \frac{\varepsilon}{5} \text{.}
\end{equation*}

Let $N = \max\{N_{a,\varphi} : a\in\mathcal{A}, \varphi \in \mathcal{F} \}$ and $n\geq N$. Let $\varphi \in \mathcal{F}$. Let $a\in\mathcal{A}$. We then compute:

\begin{align*}
\left|\varphi\left( \mathds{E}_n(a) - \mathds{E}_\infty(a) \right)\right| &= \left|\varphi\left( \int_{G} \alpha^g(a) \,d\lambda_n(g) - \int_{G} \alpha^g(a) \,d\lambda_\infty(g) \right)\right| \\
&= \left|\int_{G} \varphi\left(\alpha^g(a)\right) \,d\lambda_n(g) - \int_{G} \varphi\left(\alpha^g(a)\right) \,d\lambda_\infty(g)\right|  \\
&\leq \frac{\varepsilon}{5} \text{.}
\end{align*}

Let $\psi\in\StateSpace(\A)$ and $\varphi\in\mathcal{F}$ such that $\Kantorovich{\Lip}(\varphi,\psi) < \varepsilon$. Let $a\in\sa{\A}$ with $\Lip(a)\leq 1$. Since $\mu(a-\mu(a)\unit_\A) =0$ and $\Lip(a-\mu(a)\unit_\A) = \Lip(a) \leq 1$, there exists $b\in\mathcal{A}$ such that $\|(a-\mu(a)\unit_\A)-b\|_\A <\frac{\varepsilon}{5}$. Now, since $\Lip(\mathds{E}_n(a)) \leq \Lip(a) \leq 1$, we have:
\begin{align*}
\left|\psi\left( \mathds{E}_n(a) - \mathds{E}_\infty(a)\right)\right| &= \left|\psi\left( \mathds{E}_n(a)\right) - \psi\left(\mathds{E}_\infty(a)\right) \right|\\
&\leq\left|\psi\left( \mathds{E}_n(a)\right) - \varphi\left(\mathds{E}_n(a)\right)\right| \\
&\quad + \left| \varphi\left(\mathds{E}_n(a)\right) - \varphi\left(\mathds{E}_\infty(a)\right) \right| \\
&\quad + \left| \varphi\left( \mathds{E}_\infty(a) \right) - \psi\left(\mathds{E}_\infty(a)\right) \right|\\
&\leq 2\frac{\varepsilon}{5} + \left| \varphi\left(\mathds{E}_n(a)\right) - \varphi\left(\mathds{E}_\infty(a)\right) \right|\\
&= 2\frac{\varepsilon}{5}  + \left| \varphi\left( \mathds{E}_n(a-\mu(a)\unit_\A) - \mathds{E}_\infty(a-\mu(a)\unit_\A) \right) \right| \\
&\leq 2\frac{\varepsilon}{5} + \left|\varphi\left( \mathds{E}_n(a-\mu(a)\unit_\A) - \mathds{E}_n(b)\right)\right| \\
&\quad + \left| \varphi\left( \mathds{E}_n(b) - \mathds{E}_\infty(b)\right)\right| \\
&\quad + \left| \varphi\left( \mathds{E}_\infty(b) - \mathds{E}_\infty(a-\mu(a)\unit_\A)\right) \right| \\
&\leq 2\frac{\varepsilon}{5} + 2\|(a-\mu(a)\unit_\A)-b\|_\A + \left| \varphi\left( \mathds{E}_n(b) - \mathds{E}_\infty(b)\right) \right| \\
&\leq 4\frac{\varepsilon}{5} + \frac{\varepsilon}{5} = \varepsilon \text{.}
\end{align*}

Thus, for all $a\in\sa{\A}$ with $\Lip(a) \leq 1$ and for all $n\geq N$, since $\mathds{E}_n(a)-\mathds{E}_\infty(a)$ is self-adjoint, we have:
\begin{equation*}
\left\|\mathds{E}_n(a) - \mathds{E}_\infty(a)\right\|_\A \leq \varepsilon \text{.}
\end{equation*}

We now work with the bridge $\gamma = (\A,\iota_n,\iota_\infty,\unit_\A)$ where $\iota_n : \A_{G_n}\hookrightarrow \A$ is the canonical injection for all $n\in\Nbar$. As the pivot of this bridge is the unit, this bridge has height $0$.

Now, let $a\in\sa{\A_{G_n}}$ (so $a=\mathds{E}_n(a)$) with $\Lip(a) \leq 1$ for $n\geq N$. We compute:
\begin{equation*}
\left\| a - \mathds{E}_\infty(a) \right\|_\A = \left\| \mathds{E}_n(a) - \mathds{E}_\infty(a) \right\|_\A \leq \varepsilon \text{.}
\end{equation*}
If $a\in\sa{\A_{G_\infty}}$ with $\Lip(a) \leq 1$ and $n\geq N$ then:
\begin{equation*}
\left\| a - \mathds{E}_n(a) \right\|_\A = \left\| \mathds{E}_\infty(a) - \mathds{E}_n(a) \right\|_\A \leq \varepsilon \text{.}
\end{equation*}

Hence, the reach of the bridge $\gamma$ is no more than $\varepsilon$.
\end{proof}

We can apply Theorem (\ref{fixed-point-propinquity-thm}) for various new convergence results. 

\begin{corollary}\label{qt-cor}
Let $\sigma$ be a multiplier of $\Z^d$, with $d\in\N\setminus\{0,1\}$. Let $\ell$ be a continuous length function on $\T^d = \left\{ (z_1,\ldots,z_d) \in \C^d : \forall j \in\{1,\ldots,d\} \quad |z_j| = 1\right\}$. We denote the dual action of $\T^d$ on the quantum torus $\A_\sigma = C^\ast(\Z^d,\sigma)$ by $\alpha$. For any closed subgroup $G$ of $\T^d$, we denote the fixed point C*-subalgebra of $\A_\sigma$ for $\alpha$ restricted to $G$ as $\A_\sigma^G$.

For all $a\in\A_\sigma$, we set:
\begin{equation*}
\Lip(a) = \sup\left\{ \frac{\|a-\alpha^z(a)\|_{\A_\sigma}}{\ell(z)} : z \in \T^d\setminus\{(1,\ldots,1)\} \right\}\text{.}
\end{equation*}

If $(G_n)_{n\in\N}$ is a sequence of closed subgroups of $\T^d$ converging to some closed subgroup $G_\infty$ of $\T^d$ for the Hausdorff distance $\Haus{\ell}$ induced by the invariant metric defined by $\ell$ on $\T^d$, then:
\begin{equation*}
\lim_{n\rightarrow\infty} \dpropinquity{}\left((\A_\sigma^{G_n},\Lip), (\A_\sigma^{G_\infty},\Lip)\right) = 0 \text{.}
\end{equation*}
\end{corollary}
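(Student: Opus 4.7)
The plan is to recognize Corollary (\ref{qt-cor}) as a direct instance of Theorem (\ref{fixed-point-propinquity-thm}) with $G=\T^d$ acting on $\A_\sigma$ via the dual action $\alpha$, and length function $\ell$ as given. The proof then reduces to verifying each hypothesis of Theorem (\ref{fixed-point-propinquity-thm}) in this concrete setting, with the admissible function $F(x,y,l_x,l_y) = xl_y + yl_x$ corresponding to the Leibniz rule.

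First I would confirm that $(\A_\sigma,\Lip)$ is a {\Lqcms}. Lower semi-continuity is automatic from the presentation of $\Lip$ as a supremum of $\|\cdot\|_{\A_\sigma}$-continuous functions of $a$. The Leibniz bound $\Lip(ab)\leq \|a\|_{\A_\sigma}\Lip(b)+\Lip(a)\|b\|_{\A_\sigma}$ follows by writing
\begin{equation*}
ab-\alpha^z(a)\alpha^z(b) = (a-\alpha^z(a))\,b + \alpha^z(a)(b-\alpha^z(b)) \text{,}
\end{equation*}
using that $\alpha^z$ is isometric, then dividing by $\ell(z)$ and taking the supremum over $z\neq (1,\ldots,1)$; the corresponding Jordan-Lie inequalities are immediate consequences. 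The kernel of $\Lip$ consists of $\alpha$-invariant elements, which equals $\C\unit_{\A_\sigma}$ by ergodicity of the dual action on a twisted group C*-algebra of $\Z^d$. That $\Kantorovich{\Lip}$ metrizes the weak* topology on $\StateSpace(\A_\sigma)$, and that $\dom{\Lip}$ is a dense Jordan-Lie subalgebra of $\sa{\A_\sigma}$ (containing in particular all $\alpha$-smooth self-adjoint elements), is Rieffel's foundational result identifying quantum tori as compact quantum metric spaces.

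Next I would verify the remaining hypotheses of Theorem (\ref{fixed-point-propinquity-thm}). The group $\T^d$ is compact, metrizable, and equipped with the continuous length function $\ell$ by hypothesis, while strong continuity of the dual action $\alpha$ on $\A_\sigma$ is standard. The required invariance $\Lip\circ\alpha^w\leq\Lip$ for all $w\in\T^d$ is in fact an equality: since $\T^d$ is abelian, $\alpha^z\circ\alpha^w = \alpha^w\circ\alpha^z$, and since $\alpha^w$ is an isometric $*$-automorphism we have $\|\alpha^w(a)-\alpha^z(\alpha^w(a))\|_{\A_\sigma} = \|\alpha^w(a-\alpha^z(a))\|_{\A_\sigma} = \|a-\alpha^z(a)\|_{\A_\sigma}$ for every $z\in\T^d$, so dividing by $\ell(z)$ and taking the supremum yields $\Lip(\alpha^w(a)) = \Lip(a)$.

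Once these hypotheses are in place, Theorem (\ref{fixed-point-propinquity-thm}) applied to $(G_n)_{n\in\N}$ and $G_\infty$ immediately yields $\lim_{n\rightarrow\infty}\dpropinquity{}((\A_\sigma^{G_n},\Lip),(\A_\sigma^{G_\infty},\Lip)) = 0$. I expect the only mildly nontrivial input to be the recalled fact that $(\A_\sigma,\Lip)$ is a {\qcms}; every remaining step is a direct consequence of the abelian group structure of $\T^d$ together with the definition of $\Lip$.
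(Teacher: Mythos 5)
Your proposal is correct and follows essentially the same route as the paper: the paper likewise cites Rieffel's work \cite{Rieffel98a} for the fact that $\Lip$ is a Leibniz L-seminorm, notes that $\Lip\circ\alpha^z = \Lip$ holds by construction, and then invokes Theorem (\ref{fixed-point-propinquity-thm}). You simply spell out the details (the Leibniz identity, the translation-invariance computation via commutativity of $\T^d$) that the paper leaves implicit.
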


\begin{proof}
The seminorm $\Lip$ is a Leibniz L-seminorm, as shown in \cite{Rieffel98a}, and by construction $\Lip\circ\alpha^z = \Lip$ for all $z\in\T^d$. Thus, we are in the setting of Theorem (\ref{fixed-point-propinquity-thm}), and the conclusion follows.
\end{proof}

Corollary (\ref{qt-cor}) differs from \cite[Theorem 4.4]{Latremoliere05} and its version for the propinquity \cite{Latremoliere15b} as the continuous length function involved in our new corollary is fixed, unlike \cite[Theorem 4.4]{Latremoliere05}, and thus the convergence result is not due to changing the geometry of the torus, but rather by averaging over a convergent sequence of closed subgroups.

\providecommand{\bysame}{\leavevmode\hbox to3em{\hrulefill}\thinspace}
\providecommand{\MR}{\relax\ifhmode\unskip\space\fi MR }
% \MRhref is called by the amsart/book/proc definition of \MR.
\providecommand{\MRhref}[2]{%
  \href{http://www.ams.org/mathscinet-getitem?mr=#1}{#2}
}
\providecommand{\href}[2]{#2}

\vfill

\end{document}